\newtheorem{theorem}{Theorem}
\theoremstyle{plain}
\newtheorem{corollary}{Corollary}
\newtheorem{remark}{Remark}
\numberwithin{equation}{section}
\begin{document}
 \begin{center}
 \textbf{\large Certain composition formulae for the fractional integral operators}\\[4mm]
 { Praveen Agarwal$^{1, \ast}$ and Priyanka Harjule$^{2}$ }\\[2mm]

 $^{1}$Department of Mathematics,
Anand International College of Engineering,\\

Jaipur 303012, Rajasthan, India\\

\textbf{E-Mail: praveen.agarwal@anandice.ac.in\;\; goyal.praveen2011@gmail.com}\\[1mm]

$^{2}$ Department of Mathematics, Malaviya National Institute of Technology,\\[0pt]
 Jaipur 302017, Rajasthan, India \\[1mm]

 \textbf{E-Mail: priyankaharjule5@gmail.com}\\[6mm]

 \textbf{Abstract}
 \begin{quotation}
 In this paper we establish some (presumably new) interesting expressions for the composition of some well known fractional integral operators $ I^{\mu}_{a+}, D^{\mu}_{a+}  $,$ I^{\gamma , \mu}_{a+}$ and also derive an integral operator $\mathcal{H}^{w;m,n;\alpha}_{a+;p,q;\beta}$ whose kernel involve the Fox's $H-$ function. By suitably specializing the coefficients and the parameters in these functions we can get a large number of (new and known) interesting expressions for the composition formulae which occur rather frequently in many problems of engineering and mathematical analysis but here we can mention only those which follow as particular cases of the Srivastava et al.\cite{ZT}.

 \end{quotation}
 \end{center}

\vskip 3mm

\noindent
{\bf 2010 Mathematics Subject Classification.} Primary 26A33, 33E12; Secondary 33C60, 33E20.\\

\noindent
{\bf Key Words and Phrases.}
Fractional integral operator, $H$-functions. 
 \section{INTRODUCTION}
  Composition formulas involving many known fractional integral operators play important
r\^{o}les in themselves and in their diverse applications. Various expressions for the composition functions have been established by a large number of authors in many different ways (see, e.g., \cite{1, 38, 311, HM3, ZT, 321, 324}).\vskip 3mm

 Motivated by above mentioned works and the references cited therein, here we make use of the following Riemann-Liouville fractional integral operator $ I^{\mu}_{a+} $ and the Riemann-Liouville fractional derivative operator $D^{\mu}_{a+}$, which are defined by (see, for details, \cite{38}, \cite{39} and \cite{311}):
\begin{equation}
(I^{\mu}_{a+}f)(x)=\frac{1}{\Gamma(\mu)}\int_{a}^{x}\frac{f(t)}{(x-t)^{1-\mu}}\;dt \qquad \big(\Re(\mu)>0\big) \label{31}
\end{equation}
and
\begin{equation}
(D^{\mu}_{a+}f)(x)=\left(\frac{d}{dx}\right)^{n}(I^{n-\mu}_{a+}f)(x)\qquad \big(\Re(\mu)>0;\; n=[\Re(\mu)]+1), \label{32}
\end{equation}
where $ [x] $ denotes the greatest integer in the real number $ x $. Hilfer \cite{ZT} generalized the operator in (\ref{32}) and defined a general fractional derivative operator $ D^{\mu,\nu}_{a+} $ of order $ 0<\mu<1 $ and type
$ 0\leqq \nu\leqq 1 $ with respect to $ x $ as follows:
\begin{equation}
(D^{\mu,\nu}_{a+}f)(x)=\left(I^{\nu(1-\mu)}_{a+}\;\frac{d}{dx}\left(I^{(1-\nu)(1-\mu)}_{a+}f\right)\right)(x).\label{33}
\end{equation}

\begin{remark} The generalization in (\ref{33}) yields the classical Riemann-Liouville fractional derivative operator $ D^\mu_{a+} $ when $ \nu=0 $. When $ \nu=1,$ (\ref{33}) reduces to the fractional derivative operator introduced by Joseph Liouville, which is often attributed now-a-days to Caputo (see \cite{38} and \cite{324}; see also \cite{34}).
\end{remark}
Various general families of operators of fractional integration involving the $H$-function and its extensions including those in two and more variables were considered extensively by Srivastava and Saxena (see, for details, \cite[Sections 6 to 9]{321}). Here, in our present investigation, we find it to be convenient to use the following special case of one of these general
 families of fractional integral operators with the $H$-function in their kernels (see \cite[p. 15, eq. (6.3)]{321} and the references cited therein):\\

  \begin{equation}
  (I^{\gamma , \mu}_{a+}f)(x) = \frac{(x-a)^{-\mu-\gamma}}{\Gamma (\mu)}\int_{a}^{x}\frac{t^{\gamma}f(t)}{(x-t)^{1-\mu}}dt
  \quad(\Re(\gamma)>-1,\quad \Re(\mu)> 0)\label{612}
  \end{equation}

\begin{equation}
\left(\mathcal{H}^{w;m,n;\alpha}_{a+;p,q;\beta}\;\varphi\right)(x):= \int_{a}^{x}(x-t)^{\beta-1}\;H^{m,n}_{p,q}[w(x-t)^{\alpha}]\varphi(t)dt \label{36}
\end{equation}
$$\Bigg(\Re(\beta)>0;\; w \in  \mathbb{C}\setminus\{0\};\; 1\leqq m \leqq q;\; 0 \leqq n \leqq p;\; \Re(\beta)+\min\limits_{1\leqq j \leqq m}
\left\{\Re\left(\frac{\alpha b_j}{\beta_j}\right)\right\}>0 \Bigg).$$

 \section{Main Results}
 In this section, we derive composition formulae for the fractional integral operator $ I^{\gamma , \mu}_{a+} $, fractional integral operators with the $H$-function in their kernels, the Riemann-Liouville fractional derivative operator $D^{\mu}_{a+}$ and  a general fractional derivative operator $ D^{\mu,\nu}_{a+} $ of order $ 0<\mu<1 $ defined by \eqref{612}, \eqref{36}, \eqref{32} and \eqref{33}, respectively. Our main results are asserted by Theorems 1, 2, 3 and 4 below.\\

 \begin{theorem} Let $\Re(\gamma)>-1$ and $\Re(\mu)> 0, \Re(\beta)>0;\; w \in  \mathbb{C}\setminus\{0\};\; 1\leqq m \leqq q;\; 0 \leqq n \leqq p;\; \Re(\beta)+\min\limits_{1\leqq j \leqq m}
\left\{\Re\left(\frac{\alpha b_j}{\beta_j}\right)\right\}>0$, provided $ \left|\frac{x-u}{x-a}\right|<1.$
Then there holds the following formula:
 \begin{align}
 \left(\mathcal{H}^{w;m,n;\alpha}_{a+;p,q;\beta}I^{\gamma , \mu}_{a+}\phi \right)(x)= &\int_{a}^{x}u^\gamma (x-u)^{\mu+\beta-1}\frac{(x-a)^{-\mu-\gamma}}{\Gamma(\mu+\gamma)}\nonumber\\ \nonumber \\& H^{0,1:m,n;1,1}_{1,1:p,q;1,1}\left[\begin{array}{cc}
 w(x-u)^\alpha\\-\frac{x-u}{x-a}
 \end{array}\left|\begin{array}{cccc}
 (1-\beta;\alpha,1):& (a_j,\alpha_j)_{1,p}; (1-\mu-\gamma,1)\\ \\ (1-\mu-\beta;\alpha,1):&(b_j,\beta_j)_{1,q};(0,1)
 \end{array}\right]\right.\nonumber \\ &\phi(u)du\label{6.11}
 \end{align}
 where $ \mathcal{H}^{w;m,n;\alpha}_{a+;p,q;\beta} $ is given by (\ref{36}) and $ I^{\gamma , \mu}_{a+} $ is given by (\ref{612}).
 \end{theorem}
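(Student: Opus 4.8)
The plan is to unfold both operators, interchange the order of integration in the resulting iterated integral, reduce the inner integral to a Gauss hypergeometric function, and then recombine the Mellin--Barnes representations of that ${}_2F_1$ and of the $H$-function into the Srivastava--Panda $H$-function of two variables displayed in \eqref{6.11}.

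First I would substitute the definitions \eqref{36} and \eqref{612} to obtain
$$\left(\mathcal{H}^{w;m,n;\alpha}_{a+;p,q;\beta}I^{\gamma,\mu}_{a+}\phi\right)(x)=\frac{1}{\Gamma(\mu)}\int_a^x(x-t)^{\beta-1}H^{m,n}_{p,q}\!\left[w(x-t)^{\alpha}\right](t-a)^{-\mu-\gamma}\int_a^t\frac{u^{\gamma}\phi(u)}{(t-u)^{1-\mu}}\,du\,dt,$$
and then invoke Fubini's theorem on the triangle $a\leqq u\leqq t\leqq x$ to carry the $u$-integration outside; absolute convergence is guaranteed by $\Re(\mu)>0$, $\Re(\beta)>0$ together with the kernel condition on $\min_j\Re(\alpha b_j/\beta_j)$, which controls the behaviour of the $H$-function as $t\to x$. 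This isolates the inner integral $J(u)=\int_u^x(x-t)^{\beta-1}(t-u)^{\mu-1}(t-a)^{-\mu-\gamma}H^{m,n}_{p,q}[w(x-t)^{\alpha}]\,dt$.

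Next I would replace $H^{m,n}_{p,q}$ by its Mellin--Barnes integral $\frac{1}{2\pi i}\int_{\mathcal{L}}\Theta(s)\,[w(x-t)^{\alpha}]^{-s}\,ds$, where $\Theta(s)$ is the usual quotient of gamma products built from $(a_j,\alpha_j)_{1,p}$ and $(b_j,\beta_j)_{1,q}$, interchange the $s$- and $t$-integrations, and evaluate the elementary $t$-integral by the substitution $t=u+(x-u)\tau$. The crucial algebraic identity is $(t-a)=(x-a)\bigl[1-\tfrac{x-u}{x-a}(1-\tau)\bigr]$, which turns the $t$-integral into a standard Euler integral and gives, for $\bigl|\tfrac{x-u}{x-a}\bigr|<1$,
$$\int_u^x(x-t)^{\beta-1-\alpha s}(t-u)^{\mu-1}(t-a)^{-\mu-\gamma}\,dt=(x-u)^{\mu+\beta-1-\alpha s}(x-a)^{-\mu-\gamma}\,\frac{\Gamma(\beta-\alpha s)\,\Gamma(\mu)}{\Gamma(\mu+\beta-\alpha s)}\,{}_2F_1\!\left(\mu+\gamma,\beta-\alpha s;\mu+\beta-\alpha s;\tfrac{x-u}{x-a}\right).$$
I expect this to be the main obstacle: one must justify swapping the contour with the $t$-integration and check that, uniformly on $\mathcal{L}$, the parameters satisfy $\Re(\beta-\alpha s)>0$ and $\Re(\mu)>0$, so that the Euler representation is legitimate and the contour for the ${}_2F_1$ can be placed consistently.

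Finally I would insert the Mellin--Barnes representation ${}_2F_1(\mu+\gamma,\beta-\alpha s;\mu+\beta-\alpha s;z)=\frac{\Gamma(\mu+\beta-\alpha s)}{\Gamma(\mu+\gamma)\Gamma(\beta-\alpha s)}\frac{1}{2\pi i}\int_{\mathcal{L}'}\frac{\Gamma(\mu+\gamma+\xi)\Gamma(\beta-\alpha s+\xi)\Gamma(-\xi)}{\Gamma(\mu+\beta-\alpha s+\xi)}(-z)^{\xi}\,d\xi$ with $z=\tfrac{x-u}{x-a}$. The factors $\Gamma(\beta-\alpha s)$ and $\Gamma(\mu+\beta-\alpha s)$ cancel, the gamma factors of $\mu$ combine with the leading $\tfrac{1}{\Gamma(\mu)}$ to produce $\tfrac{1}{\Gamma(\mu+\gamma)}$, and the surviving double contour integral
$$\frac{1}{(2\pi i)^2}\int_{\mathcal{L}}\int_{\mathcal{L}'}\Theta(s)\,\frac{\Gamma(\beta-\alpha s+\xi)\,\Gamma(-\xi)\,\Gamma(\mu+\gamma+\xi)}{\Gamma(\mu+\beta-\alpha s+\xi)}\,[w(x-u)^{\alpha}]^{-s}\left(-\tfrac{x-u}{x-a}\right)^{\xi}ds\,d\xi$$
is by definition the two-variable $H$-function $H^{0,1:m,n;1,1}_{1,1:p,q;1,1}$ of \eqref{6.11}. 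Reading off the parameters confirms the identification: the coupling gammas $\Gamma(\beta-\alpha s+\xi)$ and $\Gamma(\mu+\beta-\alpha s+\xi)$ yield the entries $(1-\beta;\alpha,1)$ and $(1-\mu-\beta;\alpha,1)$; the factor $\Theta(s)$ supplies the unaltered first block $(a_j,\alpha_j)_{1,p}$, $(b_j,\beta_j)_{1,q}$; and $\Gamma(\mu+\gamma+\xi)\Gamma(-\xi)$ supplies the second block $(1-\mu-\gamma,1)$ and $(0,1)$. Restoring the prefactors $u^{\gamma}(x-u)^{\mu+\beta-1}(x-a)^{-\mu-\gamma}/\Gamma(\mu+\gamma)$ under the $u$-integral then yields \eqref{6.11}.
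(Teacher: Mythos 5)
Your proposal is correct and follows essentially the same route as the paper's own proof: unfold both operators, interchange the $u$- and $t$-integrations, replace the $H$-function by its Mellin--Barnes contour integral, evaluate the inner $t$-integral by the substitution $t=u+(x-u)\tau$ (the paper uses the equivalent $z=\tfrac{x-t}{x-u}$) via the beta-type formula of Gradshteyn--Ryzhik 3.197(3) to produce a ${}_2F_1$, and finally fold the Mellin--Barnes representation of that ${}_2F_1$ together with the $H$-function contour into the two-variable $H$-function of \eqref{6.11}. The only divergence is cosmetic but worth noting: your ${}_2F_1$ carries the argument $+\tfrac{x-u}{x-a}$ whereas the paper's intermediate equation \eqref{6.16} writes $-\tfrac{x-u}{x-a}$; your sign is the self-consistent one, since the Mellin--Barnes kernel of ${}_2F_1(\cdot\,;z)$ involves $(-z)^{\xi}$, so an argument of $+\tfrac{x-u}{x-a}$ is exactly what yields the second variable $-\tfrac{x-u}{x-a}$ displayed in \eqref{6.11}.
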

 \begin{proof}
 Let $\mathcal{C}$ be the left-hand side of \ref{6.11}. Using (\ref{36}) and (\ref{612}) respectively, we have
 \begin{equation}
 \mathcal{C}=\int_{a}^{x}(x-t)^{\beta-1}\;H^{m,n}_{p,q}[w(x-t)^{\alpha}]\frac{(t-a)^{-\mu-\gamma}}{\Gamma (\mu)}\int_{a}^{t}\frac{u^{\gamma}}{(t-u)^{1-\mu}}\phi(u)du dt\label{6.12}
 \end{equation}
 Now, we interchange the order of $ u- $integral and $ t- $integral, which is permissible under the conditions stated, we easily arrive at the following after a little simplification:
 \begin{equation}
 \mathcal{C}=\int_{a}^{x}\frac{u^\gamma}{\Gamma(\mu)}\Delta\phi(u)du\label{6.13}
 \end{equation}
 where
 \begin{equation}
 \Delta=\int_{u}^{x}(t-u)^{\mu-1}(x-t)^{\beta-1}(t-a)^{-\mu-\gamma}H^{m,n}_{p,q}[w(x-t)^\alpha]dt\label{6.14}
 \end{equation}
 To evaluate $ \Delta $, we first replace the $H-$function occurring in it in terms of its Mellin-Barnes contour integral with the help of \cite[p.10]{HM3} and interchange the order of contour integral and $t-$integral, which is permissible under the given conditions.\\
 The above equation (\ref{6.14}) now takes the following form after a little simplification:
 \begin{equation}
 \Delta=\frac{1}{2 \pi i}\int_{L}\varphi(\xi)w^\xi\int_{u}^{x}(t-u)^{\mu-1}(x-t)^{\beta+\alpha\xi-1}(t-a)^{-\mu-\gamma}dt d\xi\label{6.15}
 \end{equation}
 On setting $ z= \frac{x-t}{x-u} $ in the $ t- $integral involved in (\ref{6.15}) and evaluating the resulting $ z- $integral with the help of the known result \cite[p.286, eq.(3.197(3))]{grad}, we arrive at the following result after a little simplication:
 \begin{align}
 \Delta=\frac{1}{2 \pi i}\int_L\varphi(\xi)w^\xi &(x-u)^{\mu+\beta+\alpha\xi-1} (x-a)^{-\mu-\gamma} B(\mu,\beta+\alpha \xi)\nonumber\\&{}_2F_1\left[\mu+\gamma,\beta+\alpha \xi;\mu+\beta+\alpha \xi;-\frac{x-u}{x-a}\right] d\xi\label{6.16}
 \end{align}
 Now, writing $ {}_2F_1 $ in terms of its contour and reinterpreting the above equation(\ref{6.16}) in terms of the $ H- $function of two variables and on substituting the value of $ \Delta $ thus obtained, in (\ref{6.13}), we easily arrive at the desired result (\ref{6.11}) after a little simplification.
 \end{proof}
\begin{theorem} Let $\Re(\gamma)>-1$ and $\Re(\mu)> 0, \Re(\beta)>0;\; w \in  \mathbb{C}\setminus\{0\};\; 1\leqq m \leqq q;\; 0 \leqq n \leqq p;\; \Re(\beta)+\min\limits_{1\leqq j \leqq m}
\left\{\Re\left(\frac{\alpha b_j}{\beta_j}\right)\right\}>0$, provided $ \left|\frac{x-u}{x}\right|<1 $
Then there holds the following formula:
 \begin{align}
 \left(I^{\gamma , \mu}_{a+}\mathcal{H}^{w;m,n;\alpha}_{a+;p,q;\beta}\phi \right)&(x)= \frac{x^\gamma(x-a)^{-\mu-\gamma}}{\Gamma(\mu)\Gamma(-\gamma)}\int_{a}^{x}x^\gamma (x-u)^{\mu+\beta-1}\nonumber\\ \nonumber \\& H^{0,0:m,n+1;1,2}_{0,1:p+1,q;2,1}\left[\begin{array}{cc}
 w(x-u)^\alpha\\-\frac{x-u}{x}
 \end{array}\left|\begin{array}{cccc}
 -:(1-\beta,\alpha), (a_j,\alpha_j)_{1,p}; (1+\gamma,1),(1-\mu,1)\\ \\ (1-\mu-\beta;\alpha,1):(b_j,\beta_j)_{1,q};(0,1)
 \end{array}\right]\right.\nonumber \\ &\phi(u)du\label{6.17}
 \end{align}
 where $\mathcal{H}^{w;m,n;\alpha}_{a+;p,q;\beta} $ is given by (\ref{36}) and $ I^{\gamma , \mu}_{a+} $ is given by (\ref{612}).
 \end{theorem}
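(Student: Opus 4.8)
The plan is to mirror the proof of Theorem 1 almost verbatim, the only structural change being that the two operators are now composed in the opposite order, so that the algebraic weight attached to the inner variable changes from $(t-a)^{-\mu-\gamma}$ to $t^{\gamma}$. Writing $\mathcal{D}$ for the left-hand side of \eqref{6.17} and substituting the definitions \eqref{612} and \eqref{36}, I would first record $\mathcal{D}$ as the iterated integral
\[
\mathcal{D}=\frac{(x-a)^{-\mu-\gamma}}{\Gamma(\mu)}\int_{a}^{x}\frac{t^{\gamma}}{(x-t)^{1-\mu}}\int_{a}^{t}(t-u)^{\beta-1}H^{m,n}_{p,q}\!\left[w(t-u)^{\alpha}\right]\phi(u)\,du\,dt,
\]
and then interchange the $u$- and $t$-integrations over the triangle $a\le u\le t\le x$ (legitimate under the stated hypotheses, exactly as in Theorem 1). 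This reduces everything to the inner kernel
\[
\Delta=\int_{u}^{x}t^{\gamma}(x-t)^{\mu-1}(t-u)^{\beta-1}H^{m,n}_{p,q}\!\left[w(t-u)^{\alpha}\right]dt .
\]

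Next I would express the $H$-function through its Mellin--Barnes contour integral (as in \cite{HM3}), swap the $\xi$-contour with the $t$-integral, and evaluate the remaining elementary $t$-integral by the substitution $z=\tfrac{x-t}{x-u}$. The decisive new feature is the factor $t^{\gamma}$: under this substitution $t=x\big(1-z\tfrac{x-u}{x}\big)$, so $t^{\gamma}=x^{\gamma}\big(1-z\tfrac{x-u}{x}\big)^{\gamma}$, and the resulting $z$-integral is of Euler type, producing a beta function $B(\mu,\beta+\alpha\xi)$ together with a Gauss function ${}_2F_1\!\big(-\gamma,\mu;\mu+\beta+\alpha\xi;\tfrac{x-u}{x}\big)$. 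Thus the base point $a$ is replaced by $0$ in the hypergeometric argument and the numerator parameter becomes $-\gamma$ rather than $\mu+\gamma$; this is precisely the origin of the shifted parameters in \eqref{6.17}.

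Finally I would rewrite this ${}_2F_1$ as its own Mellin--Barnes integral in a second variable $s$. The normalizing factor $\Gamma(\mu+\beta+\alpha\xi)/\big(\Gamma(-\gamma)\Gamma(\mu)\big)$ cancels against $B(\mu,\beta+\alpha\xi)$, leaving $\Gamma(\beta+\alpha\xi)/\Gamma(-\gamma)$, which accounts for the $1/\Gamma(-\gamma)$ in the prefactor, while the standard factor $(-\zeta)^{s}$ in the hypergeometric contour formula explains the sign in the argument $-\tfrac{x-u}{x}$. Substituting $\Delta$ back into $\mathcal{D}$ leaves a double Mellin--Barnes integral in $(\xi,s)$ whose gamma factors I would then collect: the $\xi$-gammas are those of $H^{m,n}_{p,q}$ augmented by the single numerator factor $\Gamma(\beta+\alpha\xi)$, giving first-variable indices $m,n+1$ over $p+1,q$; the $s$-gammas are $\Gamma(-\gamma+s)\Gamma(\mu+s)\Gamma(-s)$, giving second-variable indices $1,2$ over $2,1$; and the single coupling denominator $\Gamma(\mu+\beta+\alpha\xi+s)$ supplies the $(1-\mu-\beta;\alpha,1)$ parameter with coupling indices $0,0$ over $0,1$. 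Reading these off against the definition of the Srivastava--Panda $H$-function of two variables yields \eqref{6.17}.

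I expect the main obstacle to lie in this last bookkeeping step, namely matching each gamma factor to the correct top or bottom parameter of the two-variable $H$-function and thereby confirming the full index array $H^{0,0:m,n+1;1,2}_{0,1:p+1,q;2,1}$, rather than in any single analytic estimate. Alongside this I would have to verify carefully that both the Fubini interchange and the two contour swaps are admissible in the region $\big|\tfrac{x-u}{x}\big|<1$, using the convergence conditions imposed on $\beta$, $w$, and the parameters of the $H$-function, as well as the restriction $\Re(\mu)>0$ that guarantees absolute convergence of the beta integral.
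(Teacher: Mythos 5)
Your proposal is correct and follows essentially the same route as the paper's own proof: substitute the definitions \eqref{612} and \eqref{36}, interchange the $u$- and $t$-integrations, replace the $H$-function by its Mellin--Barnes integral, evaluate the inner $t$-integral via $z=\tfrac{x-t}{x-u}$ to obtain $B(\mu,\beta+\alpha\xi)\,{}_2F_1\!\left(-\gamma,\mu;\mu+\beta+\alpha\xi;\cdot\right)$, and then reinterpret the resulting double contour integral as the two-variable $H$-function, exactly as in the paper's equations \eqref{6.18}--\eqref{6.22}. In fact your write-up is more careful than the paper on two points it glosses over --- the explicit gamma-factor bookkeeping that yields the index array $H^{0,0:m,n+1;1,2}_{0,1:p+1,q;2,1}$ and the prefactor $1/\Gamma(-\gamma)$, and the sign convention by which the $(-\zeta)^{s}$ factor in the ${}_2F_1$ contour representation produces the argument $-\tfrac{x-u}{x}$.
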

 \begin{proof}
 To prove (\ref{6.17}), let $\mathcal{K}$ be the left-hand side of \ref{6.13}. By using (\ref{36}) and (\ref{612}) respectively, we have
 \begin{equation}
 \mathcal{K}=\frac{(x-a)^{-\mu-\gamma}}{\Gamma(\mu)}\int_{a}^{x}t^{\gamma}(x-t)^{\mu-1}\int_{a}^{t}(t-u)^{\beta-1}H^{m,n}_{p,q}[w(x-t)^{\alpha}]\phi(u)du dt \label{6.18}
 \end{equation}
 Now, we interchange the order of $ u- $integral and $ t- $integral, which is permissible under the conditions stated, we easily arrive at the following after a little simplification:
 \begin{equation}
 \mathcal{K}=\frac{(x-a)^{-\mu-\gamma}}{\Gamma(\mu)}\int_{a}^{x}\Delta\phi(u)du\label{6.19}
 \end{equation}
 where
 \begin{equation}
 \Delta=\int_{u}^{x}t^\gamma(t-u)^{\beta-1}(x-t)^{\mu-1}H^{m,n}_{p,q}[w(x-t)^\alpha]dt\label{6.20}
 \end{equation}
 To evaluate $ \Delta $, we first replace the $ H- $ function occuring in it in terms of its Mellin-Barnes contour integral with the help of \cite[p.10]{HM3} and interchange the order of contour integral and $t-$integral, which is permissible under the given conditions.\\
 The above equation (\ref{6.20}) now takes the following form after a little simplification:
 \begin{equation}
 \Delta=\frac{1}{2 \pi i}\int_{L}\varphi(\xi)w^\xi\int_{u}^{x}t^\gamma(t-u)^{\beta+\alpha\xi-1}(x-t)^{\mu-1}dt d\xi\label{6.21}
 \end{equation}
 On setting $ z= \frac{x-t}{x-u} $ in the $ t- $integral involved in (\ref{6.21}) and evaluating the resulting $ z- $integral with the help of the known result \cite[p.286, eq.3.197(3)]{grad}, we arrive at the following result after a little simplication:
 \begin{align}
 \Delta=\frac{1}{2 \pi i}\int_L\varphi(\xi)w^\xi x^\gamma(x-u)^{\mu+\beta+\alpha\xi-1}& B(\mu,\beta+\alpha \xi)\nonumber \\ &{}_2F_1\left[-\gamma,\mu;\mu+\beta+\alpha \xi;-\frac{x-u}{x}\right]d\xi \label{6.22}
 \end{align}
 Now, expressing $ {}_2F_{1} $ in its contour form and reinterpreting the above equation(\ref{6.22}) in terms of the $ H- $function of two variables and on substituting the value of $ \Delta $ thus obtained, in (\ref{6.19}), we easily arrive at the desired result (\ref{6.17}) after a little simplification.
 \end{proof}

 \begin{theorem}
 Let $\Re(\mu)>0;\; n=[\Re(\mu)]+1$ and $\Re(\beta)>0;\; w \in  \mathbb{C}\setminus\{0\};\; 1\leqq m \leqq q;\; 0 \leqq n \leqq p;\; \Re(\beta)+\min\limits_{1\leqq j \leqq m}
\left\{\Re\left(\frac{\alpha b_j}{\beta_j}\right)\right\}>0$.
Then there holds the following formula:
 \begin{equation}
 D^{\mu}_{a+}\left(\mathcal{H}^{w;m,n;\alpha}_{a+;p,q;\beta}\phi\right)(x)=\left(\mathcal {H}^{w;m,n+2;\alpha}_{a+;p+2,q+2;\beta-\mu}\phi\right)(x)\label{6.24}
 \end{equation}
 where $ D^{\mu}_{a+} $ is given by (\ref{32}) and $ \mathcal{H}^{w;m,n;\alpha}_{a+;p,q;\beta} $ is given by (\ref{36}).
 \end{theorem}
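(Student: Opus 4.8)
The strategy is to peel off the Riemann--Liouville derivative through its very definition \eqref{32}, thereby reducing the identity to one fractional integration followed by an ordinary $N$-fold differentiation, and then to read off the effect of each of these two elementary operations on a power multiplied by an $H$-function via the Mellin--Barnes representation already used in Theorems 1 and 2. To avoid a clash with the second index $n$ of the $H$-function, I write $N:=[\Re(\mu)]+1$ for the integer denoted $n$ in \eqref{32}. By \eqref{32},
\[
D^{\mu}_{a+}\left(\mathcal{H}^{w;m,n;\alpha}_{a+;p,q;\beta}\phi\right)(x)=\left(\frac{d}{dx}\right)^{N}\left(I^{N-\mu}_{a+}\mathcal{H}^{w;m,n;\alpha}_{a+;p,q;\beta}\phi\right)(x),
\]
so it suffices to treat the two factors in turn.

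For the inner factor I would insert the definitions \eqref{31} and \eqref{36}, interchange the two integrations --- legitimate under the stated conditions exactly as in the earlier proofs --- and carry out the inner $t$-integral after the substitution $\tau=t-u$. What remains is precisely the fractional integral of order $N-\mu$ of the power-weighted kernel $\tau^{\beta-1}H^{m,n}_{p,q}[w\tau^{\alpha}]$; representing the $H$-function by its Mellin--Barnes contour (\cite[p.~10]{HM3}) and evaluating the elementary beta integral (\cite[p.~286]{grad}) produces the quotient $\Gamma(\beta-\alpha\xi)/\Gamma(\beta+N-\mu-\alpha\xi)$, which reinterprets as one additional parameter pair. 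This gives the composition
\[
\left(I^{N-\mu}_{a+}\mathcal{H}^{w;m,n;\alpha}_{a+;p,q;\beta}\phi\right)(x)=\left(\mathcal{H}^{w;m,n+1;\alpha}_{a+;p+1,q+1;\beta+N-\mu}\phi\right)(x),
\]
whose kernel has acquired the numerator pair $(1-\beta,\alpha)$ and the denominator pair $(1-\beta-N+\mu,\alpha)$.

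It then remains to differentiate this convolution $N$ times. Writing $K_{1}(\tau)=\tau^{\beta+N-\mu-1}H^{m,n+1}_{p+1,q+1}[w\tau^{\alpha}]$ for its kernel, Leibniz differentiation of $\int_{a}^{x}K_{1}(x-u)\phi(u)\,du$ yields the boundary contributions $K_{1}^{(j)}(0^{+})\phi(x)$ for $0\leqq j\leqq N-1$ together with $\int_{a}^{x}K_{1}^{(N)}(x-u)\phi(u)\,du$; I will argue below that the boundary terms drop. To identify $K_{1}^{(N)}$ I again pass to the Mellin--Barnes integral, where the monomial differentiation
\[
\frac{d^{N}}{d\tau^{N}}\,\tau^{\beta+N-\mu-1-\alpha\xi}=\frac{\Gamma(\beta+N-\mu-\alpha\xi)}{\Gamma(\beta-\mu-\alpha\xi)}\,\tau^{\beta-\mu-1-\alpha\xi}
\]
inserts the quotient $\Gamma(\beta+N-\mu-\alpha\xi)/\Gamma(\beta-\mu-\alpha\xi)$, i.e. the two further pairs $(1-\beta-N+\mu,\alpha)$ (numerator) and $(1-\beta+\mu,\alpha)$ (denominator). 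Collecting the exponent $\beta-\mu-1$ and the indices $m,\,n+2,\,p+2,\,q+2$ reproduces exactly the kernel of the right-hand side of \eqref{6.24}.

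The one genuinely delicate point is the vanishing of the intermediate boundary terms $K_{1}^{(j)}(0^{+})$, and this is where I expect the main work to lie. Since the small-argument behaviour of $H^{m,n+1}_{p+1,q+1}[w\tau^{\alpha}]$ is governed by the first $m$ lower parameters, one has $K_{1}^{(j)}(\tau)\sim\tau^{\,\beta+N-\mu-1-j+\min_{1\leqq k\leqq m}\Re(\alpha b_{k}/\beta_{k})}$ as $\tau\to0^{+}$; the real part of this exponent is smallest at the top order $j=N-1$, where it equals $\Re(\beta-\mu)+\min_{1\leqq k\leqq m}\Re(\alpha b_{k}/\beta_{k})$. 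This is positive precisely under the condition that makes the right-hand operator $\mathcal{H}^{w;m,n+2;\alpha}_{a+;p+2,q+2;\beta-\mu}$ itself well defined, so every boundary term vanishes exactly when the asserted identity has meaning, and differentiation under the integral sign is justified on the same range. Finally I would note, as a consistency check, that the pair $(1-\beta-N+\mu,\alpha)$ occurs simultaneously in the numerator and denominator of the resulting $H$-function and hence cancels; this both reduces the answer to an $H^{m,n+1}_{p+1,q+1}$ form and confirms that the right-hand side of \eqref{6.24} is independent of the auxiliary integer $N$, as it must be.
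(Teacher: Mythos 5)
Your proposal is correct and follows essentially the same route as the paper: both decompose $D^{\mu}_{a+}=\left(\tfrac{d}{dx}\right)^{N}I^{N-\mu}_{a+}$ via (\ref{32}), both pass through the intermediate composition $\left(I^{N-\mu}_{a+}\mathcal{H}^{w;m,n;\alpha}_{a+;p,q;\beta}\phi\right)(x)=\left(\mathcal{H}^{w;m,n+1;\alpha}_{a+;p+1,q+1;\beta+N-\mu}\phi\right)(x)$ (which the paper simply cites as the already-established result (\ref{6.23})), and both then absorb the $N$-fold differentiation into the Mellin--Barnes kernel as a second added parameter pair, yielding $\mathcal{H}^{w;m,n+2;\alpha}_{a+;p+2,q+2;\beta-\mu}$. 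The only difference is one of rigor: you derive the intermediate composition directly and justify the term-by-term differentiation together with the vanishing of the Leibniz boundary terms, steps the paper asserts without comment.
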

 \begin{proof}
 To prove (\ref{6.24}) we make use of definition (\ref{32}) of $ D^{\mu}_{a+} $ involved in the left hand side of (\ref{6.24}), we get
 \begin{equation}
 D^{\mu}_{a+}\left(\mathcal{H}^{w;m,n;\alpha}_{a+;p,q;\beta}\phi\right)(x)=\left(\frac{d}{dx}\right)^n I^{n-\lambda}_{a+}\left(\mathcal{H}^{w;m,n;\alpha}_{a+;p,q;\beta}\phi\right)\label{step11}
 \end{equation}
 With the help of result (\ref{6.23}) the above equation (\ref{step11}) takes the following form:
 \begin{align}
 D^{\mu}_{a+}\left(\mathcal{H}^{w;m,n;\alpha}_{a+;p,q;\beta}\phi\right)(x)&=\left(\frac{d}{dx}\right)^n \left(\mathcal{H}^{w;m,n+1;\alpha}_{a+;p+1,q+1;\beta+n-\lambda}\phi\right)\nonumber \\ \nonumber \\ &=\left(\frac{d}{dx}\right)^n\int_{a}^{x}(x-t)^{\beta-1+n-\lambda}H^{m,n+1}_{p+1,q+1}[w(x-t)^\alpha]\phi(t)dt\nonumber \\ \nonumber \\ &=\int_{a}^{x}(x-t)^{\beta-1-\lambda}H^{m,n+2}_{p+2,q+2}[w(x-t)^\alpha]\phi(t)dt\nonumber \\ \nonumber \\ &=\left(\mathcal {H}^{w;m,n+2;\alpha}_{a+;p+2,q+2;\beta-\mu}\phi\right)(x)\label{step12}
 \end{align}

 \end{proof}
 \begin{remark}
 On similar lines we can prove the following:
 \begin{equation}
 \left(\mathcal{H}^{w;m,n;\alpha}_{a+;p,q;\beta}D^{\mu}_{a+}\phi\right)(x)=\left(\mathcal {H}^{w;m,n+2;\alpha}_{a+;p+2,q+2;\beta-\mu}\phi\right)(x)
 \end{equation}
 \end{remark}
 \begin{theorem}
 Let $ 0<\mu<1; 0\leqq \nu\leqq 1 $  and $\Re(\beta)>0;\; w \in  \mathbb{C}\setminus\{0\};\; 1\leqq m \leqq q;\; 0 \leqq n \leqq p;\; \Re(\beta)+\min\limits_{1\leqq j \leqq m}
\left\{\Re\left(\frac{\alpha b_j}{\beta_j}\right)\right\}>0$.
Then there holds the following formula:
 \begin{equation}
 D^{\mu,\nu}_{a+}\left(\mathcal{H}^{w;m,n;\alpha}_{a+;p,q;\beta}\phi\right)(x)=\left(\mathcal {H}^{w;m,n+3;\alpha}_{a+;p+3,q+3;\beta-\mu}\phi\right)(x)\label{formula5}
 \end{equation}
 where $ D^{\mu,\nu}_{a+} $ is given by (\ref{33}) and $ \mathcal{H}^{w;m,n;\alpha}_{a+;p,q;\beta} $ is given by (\ref{36}).
 \end{theorem}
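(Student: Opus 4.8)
The plan is to peel the Hilfer operator \eqref{33} into its three elementary pieces and push each one through the $H$-operator in turn, reusing the two building blocks that already drive the proof of Theorem~3. Writing \eqref{33} out, the left-hand side of \eqref{formula5} equals
\[
\Bigl(I^{\nu(1-\mu)}_{a+}\,\tfrac{d}{dx}\,I^{(1-\nu)(1-\mu)}_{a+}\bigl(\mathcal{H}^{w;m,n;\alpha}_{a+;p,q;\beta}\phi\bigr)\Bigr)(x),
\]
so, working from the inside out, I would apply successively: (i) the Riemann--Liouville composition formula invoked in the proof of Theorem~3, namely $I^{\sigma}_{a+}\bigl(\mathcal{H}^{w;m,n;\alpha}_{a+;p,q;\beta}\phi\bigr)=\mathcal{H}^{w;m,n+1;\alpha}_{a+;p+1,q+1;\beta+\sigma}\phi$ for $\Re(\sigma)>0$; (ii) the differentiation-under-the-integral step of that same proof, which carries $\mathcal{H}^{w;m,n;\alpha}_{a+;p,q;\beta}\phi$ to $\mathcal{H}^{w;m,n+1;\alpha}_{a+;p+1,q+1;\beta-1}\phi$ (a single $d/dx$ suffices because $0<\mu<1$ forces $[\Re(\mu)]+1=1$); and (iii) the formula in (i) once more.

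Tracking the indices is then routine. The inner integration, with $\sigma=(1-\nu)(1-\mu)>0$, yields $\mathcal{H}^{w;m,n+1;\alpha}_{a+;p+1,q+1;\beta+(1-\nu)(1-\mu)}\phi$; one differentiation produces $\mathcal{H}^{w;m,n+2;\alpha}_{a+;p+2,q+2;\beta+(1-\nu)(1-\mu)-1}\phi$; and the outer integration, with $\sigma=\nu(1-\mu)$, gives $\mathcal{H}^{w;m,n+3;\alpha}_{a+;p+3,q+3;\,\beta+(1-\nu)(1-\mu)-1+\nu(1-\mu)}\phi$. Each of the three operations contributes exactly one new numerator and one new denominator parameter, so the four $H$-indices climb to $m,\,n+3;\,p+3,\,q+3$, as demanded. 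The order parameter collapses by the elementary identity $(1-\nu)(1-\mu)+\nu(1-\mu)-1=(1-\mu)-1=-\mu$, which is independent of the type $\nu$; hence the surviving exponent is $\beta-\mu$ and the right-hand side of \eqref{formula5} is recovered.

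A more compact variant worth recording is to observe that $\tfrac{d}{dx}\,I^{(1-\nu)(1-\mu)}_{a+}=D^{\sigma}_{a+}$ with $\sigma=1-(1-\nu)(1-\mu)\in(0,1)$ whenever $0<\nu<1$, so that $D^{\mu,\nu}_{a+}=I^{\nu(1-\mu)}_{a+}\,D^{\sigma}_{a+}$. Theorem~3, applied with $\mu$ replaced by this $\sigma$, then turns $D^{\sigma}_{a+}\bigl(\mathcal{H}\phi\bigr)$ into $\mathcal{H}^{w;m,n+2;\alpha}_{a+;p+2,q+2;\beta-\sigma}\phi$, and a single further use of the composition formula in (i) closes the count, the exponent being $\beta-\sigma+\nu(1-\mu)=\beta-\mu$ by the same arithmetic.

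The parameter bookkeeping is harmless; the real point to justify is the middle step, i.e.\ carrying $d/dx$ inside the $t$-integral and discarding the boundary contribution at $t=x$. This is legitimate precisely because the preceding fractional integral raises $\Re(\beta)$ by $(1-\nu)(1-\mu)>0$, so the power $(x-t)^{\beta+(1-\nu)(1-\mu)-1}$ carries a large enough real part for the endpoint term to vanish; the attendant interchanges of the Mellin--Barnes contour with the $t$-integral are permissible under the same convergence and growth hypotheses on the $H$-function already imposed in \eqref{36} and used in Theorems~1--3. Finally, I would note that at the endpoints $\nu=0$ and $\nu=1$ one of the two integrations has order zero, whereupon a numerator and a denominator parameter coincide and the $H$-function reduces by one order; this is consistent with the Remark identifying the case $\nu=0$ with the Riemann--Liouville derivative $D^{\mu}_{a+}$ of Theorem~3.
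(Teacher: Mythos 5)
Your proof is correct and takes essentially the same route as the paper: the paper's own argument is precisely your ``compact variant,'' writing $D^{\mu,\nu}_{a+}=I^{\nu(1-\mu)}_{a+}D^{\mu+\nu-\mu\nu}_{a+}$ with $\mu+\nu-\mu\nu=1-(1-\nu)(1-\mu)$, applying Theorem~3 at that order, and then the $I$-composition formula of your step (i). Incidentally, your index bookkeeping is cleaner than the paper's intermediate display, which misprints the indices as $n+1$, $p+1$, $q+3$ where $n+2$, $p+2$, $q+2$ are meant.
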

 \begin{proof}
 Making use of the composition relationships asserted by (\ref{6.24}) and (\ref{6.23}), we find that
 \begin{equation}
 D^{\mu+\nu-\mu \nu}_{a+}\left(\mathcal{H}^{w;m,n;\alpha}_{a+;p,q;\beta}\phi\right)(x)=\left(\mathcal {H}^{w;m,n+1;\alpha}_{a+;p+1,q+3;\beta-\mu-\nu+\mu \nu}\phi\right)(x)\label{6.25}
 \end{equation}
 and
 \begin{align}
 D^{\mu,\nu}_{a+}\left(\mathcal{H}^{w;m,n;\alpha}_{a+;p,q;\beta}\phi\right)(x)& =I^{\nu(1-\mu)}_{a+}D^{\mu+\nu-\mu \nu}_{a+}\left(\mathcal{H}^{w;m,n;\alpha}_{a+;p,q;\beta}\phi\right)(x)\nonumber \\ \nonumber \\&=I^{\nu(1-\mu)}_{a+}\left(\mathcal {H}^{w;m,n+1;\alpha}_{a+;p+1,q+3;\beta-\mu-\nu+\mu \nu}\phi\right)(x)\nonumber \\ \nonumber \\&=\left(\mathcal {H}^{w;m,n+3;\alpha}_{a+;p+3,q+3;\beta-\mu}\phi\right)(x)\label{formula5.1}
 \end{align}
 which would complete the proof of (\ref{formula5}).
 \end{proof}
  \begin{remark}If we reduce the $ H-$function involved in the integral operator $\mathcal{H}^{w;m,n;\alpha}_{a+;p,q;\beta}$ to Mittag-Leffler function in  (\ref{6.23}), (\ref{6.24}) and (\ref{formula5}) we get the results obtained by Srivastava et al.\cite[p. 7, eq.(2.23), (2.24) and (2.25)]{ZT} respectively.
  \end{remark}
 \section{ Concluding Remarks and Observations}
  It is important to mention here that, whenever the integral operator $\mathcal{H}^{w;m,n;\alpha}_{a+;p,q;\beta}$
reduces to the Mittag-Leffler function, modified Bessel functions and other related special functions, the results become relatively more important from the application viewpoint in statistics and other disciplines. The $H$-function can be expressed in terms of the Gauss hypergeometric function and other related hypergeometric functions. Therefore, the present composition formulae play important r\^{o}les in the theory of special functions.

\vskip 2mm

We conclude our present investigation by presenting a special case out of many by  setting $ \gamma=0 $ in (\ref{612}), under the various parametric constraints listed already with the definition (\ref{36}), the following Corollary is obtained as special cases of  (\ref{6.11}) and (\ref{6.17}):
 \begin{corollary}
 Let $\Re(\mu)> 0, \Re(\beta)>0;\; w \in  \mathbb{C}\setminus\{0\};\; 1\leqq m \leqq q;\; 0 \leqq n \leqq p;\; \Re(\beta)+\min\limits_{1\leqq j \leqq m}
\left\{\Re\left(\frac{\alpha b_j}{\beta_j}\right)\right\}>0$
 \begin{equation}
 \left(\mathcal{H}^{w;m,n;\alpha}_{a+;p,q;\beta}I^{\mu}_{a+}\phi \right)(x)=\left(\mathcal {H}^{w;m,n+1;\alpha}_{a+;p+1,q+1;\beta+\mu}\phi\right)(x)\label{6.23}
 \end{equation}
 \end{corollary}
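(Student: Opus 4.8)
The plan is to prove \eqref{6.23} by the same Mellin--Barnes technique used for Theorems 1--3, noting that the composition with the Riemann--Liouville operator $I^{\mu}_{a+}$ is in fact the simplest case: because $I^{\mu}_{a+}$ in \eqref{31} carries no algebraic weight of the form $(t-a)^{-\mu-\gamma}$ (unlike $I^{\gamma,\mu}_{a+}$ in Theorem 1), the inner $t$-integral collapses to a pure Beta integral and no ${}_2F_1$ appears, so the answer is a single-variable $H$-function rather than an $H$-function of two variables. First I would insert the definitions \eqref{36} of $\mathcal{H}$ and \eqref{31} of $I^{\mu}_{a+}$ into the left-hand side $\mathcal{C}=(\mathcal{H}^{w;m,n;\alpha}_{a+;p,q;\beta}I^{\mu}_{a+}\phi)(x)$, obtaining a double integral over the region $a\le u\le t\le x$. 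Under the stated parametric constraints the order of the $t$- and $u$-integrations may be interchanged (Dirichlet/Fubini), leading to
\[
\mathcal{C}=\frac{1}{\Gamma(\mu)}\int_{a}^{x}\phi(u)\,\Delta\,du,\qquad
\Delta=\int_{u}^{x}(t-u)^{\mu-1}(x-t)^{\beta-1}H^{m,n}_{p,q}[w(x-t)^{\alpha}]\,dt.
\]

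Next I would evaluate $\Delta$. Replacing the $H$-function by its Mellin--Barnes contour integral (as in the proofs above, via \cite[p.10]{HM3}) and interchanging the contour and $t$-integrations gives $\Delta=\frac{1}{2\pi i}\int_{L}\varphi(\xi)w^{\xi}\big(\int_{u}^{x}(t-u)^{\mu-1}(x-t)^{\beta+\alpha\xi-1}\,dt\big)\,d\xi$. The inner integral is now an elementary Beta integral: the substitution $z=(x-t)/(x-u)$ turns it into $(x-u)^{\mu+\beta+\alpha\xi-1}B(\mu,\beta+\alpha\xi)$, with no hypergeometric factor, which is precisely the simplification relative to Theorem 1 where the weight $(t-a)^{-\mu-\gamma}$ forces the ${}_2F_1$ via \cite[p.286, eq.(3.197(3))]{grad}. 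Hence
\[
\Delta=(x-u)^{\mu+\beta-1}\,\frac{1}{2\pi i}\int_{L}\varphi(\xi)\,
\frac{\Gamma(\mu)\,\Gamma(\beta+\alpha\xi)}{\Gamma(\mu+\beta+\alpha\xi)}\,[w(x-u)^{\alpha}]^{\xi}\,d\xi,
\]
and the factor $\Gamma(\mu)$ cancels against the prefactor $1/\Gamma(\mu)$ in $\mathcal{C}$.

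The decisive step is to recognize the modified integrand as a single $H$-function. The extra quotient $\Gamma(\beta+\alpha\xi)/\Gamma(\mu+\beta+\alpha\xi)$ is absorbed by adjoining one numerator gamma factor $\Gamma(\beta+\alpha\xi)$, which is a new top parameter lying in the $n$-group and so raises $n\mapsto n+1$ and $p\mapsto p+1$, together with one denominator gamma factor $\Gamma(\mu+\beta+\alpha\xi)$, which is a new bottom parameter lying outside the $m$-group and so raises $q\mapsto q+1$ while leaving $m$ fixed. Combined with the external power $(x-u)^{\mu+\beta-1}=(x-u)^{(\beta+\mu)-1}$, this is exactly the kernel of $\mathcal{H}^{w;m,n+1;\alpha}_{a+;p+1,q+1;\beta+\mu}$; substituting $\Delta$ back into $\mathcal{C}$ then yields the right-hand side of \eqref{6.23}.

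I expect the main obstacle to be the bookkeeping in this last recognition step: one must verify that the adjoined parameter pairs are placed in the correct $(n{+}1)$- and $(q{+}1)$-slots so that the index change reads $H^{m,n}_{p,q}\to H^{m,n+1}_{p+1,q+1}$ with the order shifting $\beta\to\beta+\mu$, and that the convergence and contour conditions listed with \eqref{36} justify both interchanges of integration and the Beta-integral evaluation. Finally I would remark that, although \eqref{6.23} is advertised as the case $\gamma=0$ of Theorem 1, setting $\gamma=0$ in $I^{\gamma,\mu}_{a+}$ produces the weighted operator $(x-a)^{-\mu}I^{\mu}_{a+}$ rather than $I^{\mu}_{a+}$ itself; it is precisely the absence of the weight $(t-a)^{-\mu-\gamma}$ that removes the ${}_2F_1$ and collapses the two-variable $H$-function of \eqref{6.11} to the single-variable $H$-function above, so the direct computation just described is the safer route.
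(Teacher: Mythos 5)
Your proposal is correct and follows essentially the same route as the paper's own proof: insert the definitions \eqref{36} and \eqref{31}, interchange the $u$- and $t$-integrations, replace the $H$-function by its Mellin--Barnes contour integral, evaluate the inner $t$-integral as a Beta integral via $z=(x-t)/(x-u)$, and reinterpret the resulting contour integral (with the extra factor $\Gamma(\beta+\alpha\xi)/\Gamma(\mu+\beta+\alpha\xi)$) as the kernel of $\mathcal{H}^{w;m,n+1;\alpha}_{a+;p+1,q+1;\beta+\mu}$. Your explicit tracking of the $\Gamma(\mu)$ cancellation and of the slot bookkeeping for the adjoined parameter pairs is, if anything, more careful than the paper's, and your closing observation that $\gamma=0$ in \eqref{612} actually yields $(x-a)^{-\mu}I^{\mu}_{a+}$ rather than $I^{\mu}_{a+}$ is a fair caveat about deriving the corollary from Theorem 1.
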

 \begin{proof}
 To prove (\ref{6.23}), we first express both $ \mathcal{H}^{w;m,n;\alpha}_{a+;p,q;\beta}$ and $I^{\mu}_{a+} $ involved in its left hand side, in the integral form with the help of (\ref{36}) and (\ref{31}) respectively, we have
 \begin{equation}
 \left(\mathcal{H}^{w;m,n;\alpha}_{a+;p,q;\beta}I^{\mu}_{a+}\phi \right)(x)=\int_{a}^{x}(x-t)^{\beta-1}H^{m,n}_{p,q}[w(x-t)^\alpha]\frac{1}{\Gamma(\mu)}\int_{a}^{t}(t-u)^{\mu-1}\phi(u)du dt\label{step1}
 \end{equation}
 Next, we change the order of $ u- $integral and $ t- $integral, which is permissible under the conditions stated, we easily arrive at the following after a little simplification:
 \begin{equation}
 \left(\mathcal{H}^{w;m,n;\alpha}_{a+;p,q;\beta}I^{\mu}_{a+}\phi \right)(x)=\frac{1}{\Gamma(\mu)}\int_{a}^{x}\Delta\phi(u)du\label{step2}
 \end{equation}
 where
 \begin{equation}
 \Delta=\int_{u}^{x}(t-u)^{\mu-1}(x-t)^{\beta-1}H^{m,n}_{p,q}[w(x-t)^\alpha]dt\label{step3}
 \end{equation}
 To evaluate $ \Delta $, we first replace the $ H- $ function occuring in it in terms of its Mellin-Barnes contour integral with the help of \cite[p.10]{HM3} and interchange the order of contour integral and $t-$integral, which is permissible under the given conditions.\\
 The above equation (\ref{step3}) now takes the following form after a little simplification:
 \begin{equation}
 \Delta=\frac{1}{2 \pi i}\int_{L}\varphi(\xi)w^\xi\int_{u}^{x}(t-u)^{\mu-1}(x-t)^{\beta+\alpha \xi-1}dt d\xi\label{step4}
 \end{equation}
 On setting $ z= \frac{x-t}{x-u} $ in the $ t- $integral involved in (\ref{step4}) and evaluating the resulting $ z- $integral, we arrive at the following result after a little simplication:
 \begin{align}
 \Delta=\frac{1}{2 \pi i}\int_L\varphi(\xi)w^\xi (x-u)^{\mu+\beta+\alpha\xi-1}\frac{\Gamma(\beta+\alpha \xi)}{\Gamma(\mu+\beta+\alpha\xi)}d\xi\label{step5}
 \end{align}
 Now, reinterpreting the above equation(\ref{step5}) in terms of the $ H- $function and on substituting the value of $ \Delta $ thus obtained, in (\ref{step2}), we easily arrive at the desired result (\ref{6.23}) after a little simplification.
 \end{proof}
 \vskip 3mm
 on similar lines we can also prove the following:
  \begin{corollary} Let $\Re(\mu)> 0, \Re(\beta)>0;\; w \in  \mathbb{C}\setminus\{0\};\; 1\leqq m \leqq q;\; 0 \leqq n \leqq p;\; \Re(\beta)+\min\limits_{1\leqq j \leqq m}
\left\{\Re\left(\frac{\alpha b_j}{\beta_j}\right)\right\}>0$ 
Then there holds the following formula:
 \begin{equation}
 \left(I^{\mu}_{a+}\mathcal{H}^{w;m,n;\alpha}_{a+;p,q;\beta}\phi \right)(x)=\left(\mathcal {H}^{w;m,n+1;\alpha}_{a+;p+1,q+1;\beta+\mu}\phi\right)(x)
 \end{equation}
 \end{corollary}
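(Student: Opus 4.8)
The plan is to run the proof of Corollary 1 almost verbatim, the single structural change being that the $H$-function in the kernel now carries the variable $(t-u)$ rather than $(x-t)$; I will show that this difference is absorbed by the symmetry of the Beta integral, so that both orders of composition yield the same kernel. First I would insert the definitions \eqref{31} and \eqref{36} into the left-hand side, obtaining
\[
\left(I^{\mu}_{a+}\mathcal{H}^{w;m,n;\alpha}_{a+;p,q;\beta}\phi \right)(x)=\frac{1}{\Gamma(\mu)}\int_{a}^{x}(x-t)^{\mu-1}\int_{a}^{t}(t-u)^{\beta-1}H^{m,n}_{p,q}[w(t-u)^{\alpha}]\phi(u)\,du\,dt.
\]
Interchanging the $u$- and $t$-integrations over the triangle $a\leqq u\leqq t\leqq x$ (legitimate under the stated constraints) recasts this as $\frac{1}{\Gamma(\mu)}\int_{a}^{x}\Delta\,\phi(u)\,du$, where
\[
\Delta=\int_{u}^{x}(x-t)^{\mu-1}(t-u)^{\beta-1}H^{m,n}_{p,q}[w(t-u)^{\alpha}]\,dt.
\]

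Next I would evaluate $\Delta$ exactly as in Corollary 1. Replacing the $H$-function by its Mellin--Barnes contour integral via \cite[p.10]{HM3} and interchanging the contour and $t$-integrations gives
\[
\Delta=\frac{1}{2\pi i}\int_{L}\varphi(\xi)w^{\xi}\int_{u}^{x}(x-t)^{\mu-1}(t-u)^{\beta+\alpha\xi-1}\,dt\,d\xi,
\]
and the substitution $z=\frac{x-t}{x-u}$ reduces the inner integral to $(x-u)^{\mu+\beta+\alpha\xi-1}B(\mu,\beta+\alpha\xi)$. The key point is that here the exponents of $(x-t)$ and $(t-u)$ are interchanged relative to \eqref{step3}, so one meets $B(\mu,\beta+\alpha\xi)$ in place of $B(\beta+\alpha\xi,\mu)$; since the Beta function is symmetric these are equal, both being $\Gamma(\mu)\Gamma(\beta+\alpha\xi)/\Gamma(\mu+\beta+\alpha\xi)$, and hence $\Delta$ coincides identically with the value recorded in \eqref{step5}.

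Finally I would substitute $\Delta$ back, cancel the prefactor $1/\Gamma(\mu)$ against the $\Gamma(\mu)$ produced by the Beta integral, and arrive at
\[
\left(I^{\mu}_{a+}\mathcal{H}^{w;m,n;\alpha}_{a+;p,q;\beta}\phi \right)(x)=\int_{a}^{x}(x-u)^{\mu+\beta-1}\left[\frac{1}{2\pi i}\int_{L}\varphi(\xi)\,\frac{\Gamma(\beta+\alpha\xi)}{\Gamma(\mu+\beta+\alpha\xi)}\,w^{\xi}(x-u)^{\alpha\xi}\,d\xi\right]\phi(u)\,du.
\]
Reading the bracketed contour integral as a Mellin--Barnes representation, the extra factor $\Gamma(\beta+\alpha\xi)/\Gamma(\mu+\beta+\alpha\xi)$ introduces one new numerator gamma, corresponding to the upper parameter $(1-\beta,\alpha)$ and raising both $n$ and $p$ by one, together with one new denominator gamma, corresponding to the lower parameter $(1-\mu-\beta,\alpha)$ and raising $q$ by one; thus the bracket equals $H^{m,n+1}_{p+1,q+1}[w(x-u)^{\alpha}]$, while the exponent $\mu+\beta-1$ matches $(\beta+\mu)-1$. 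Comparison with \eqref{36} then identifies the right-hand side as $\left(\mathcal{H}^{w;m,n+1;\alpha}_{a+;p+1,q+1;\beta+\mu}\phi\right)(x)$, which is the assertion. I expect the one genuine point requiring care to be the justification of the two interchanges of integration; these rest on absolute convergence, which is secured by $\Re(\mu)>0$, $\Re(\beta)>0$ and the endpoint condition $\Re(\beta)+\min_{1\leqq j\leqq m}\Re(\alpha b_j/\beta_j)>0$ governing the $H$-function near $t=u$. Once these are in hand the remaining work is mechanical, the symmetry of the Beta function being what makes Corollary 2 fall out of the same computation as Corollary 1.
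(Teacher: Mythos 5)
Your proposal is correct and is essentially the proof the paper intends: the paper dismisses Corollary 2 with ``on similar lines we can also prove the following,'' and you have written out exactly those lines, mirroring the proof of Corollary 1 (insert \eqref{31} and \eqref{36}, interchange the $u$- and $t$-integrations, pass to the Mellin--Barnes contour, evaluate the inner integral via $z=\frac{x-t}{x-u}$, and reinterpret the resulting contour integral as $H^{m,n+1}_{p+1,q+1}$). Your observation that the only structural difference from Corollary 1 is the interchange of the exponent roles of $(x-t)$ and $(t-u)$, absorbed by the symmetry $B(\mu,\beta+\alpha\xi)=B(\beta+\alpha\xi,\mu)$, is precisely the reason the paper could treat the two corollaries as interchangeable.
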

 We now turn to one of the fundamentally important transcendental functions, generalization of the modified Bessel function of the third kind or Macdonald function which will be represented in the following form:\cite[p.152, eq.(1.2); p.155, eq.(2.6)]{HJ}
  \begin{align}
  \lambda^{(\eta)}_{\mu,\nu}[z]&=\dfrac{\eta}{\Gamma(\mu+1-1/\eta)}\int\limits_{1}^{\infty}(t^\eta-1)^{\mu-1/\eta}t^{\nu}e^{-zt}dt \nonumber\\ &=H^{2,0}_{1,2}\left[z\left|\begin{array}{cc}
  (1-(\nu+1)/\eta,1/\eta)\\(0,1),(-\mu-\nu/\eta,1/\eta)
  \end{array}\right]\right.\label{lambda1}
  \end{align}
  $$\left( \eta > 0;\quad \Re(\mu)>1/\eta-1; \quad \nu\in\Re;\quad \Re(z)>0 \right)$$
  The function in (\ref{lambda1}) was introduced by Kilbas et al.\cite{KS}. Such a function was used by Bonilla et al.\cite{BK} to solve some homogeneous differential equations of fractional order and Volterra integral equations.\\
  \begin{corollary}
  Let $ \Re(\mu)> 0, \Re(\beta)>0;\; w \in  \mathbb{C}\setminus\{0\}; \left( \eta > 0;\quad \Re(\mu)>1/\eta-1; \quad \nu\in\Re; \right)$\\ Then there holds the following formula:
  \begin{equation}
  \left(I^{\mu}_{a+}\mathcal{H}^{w;2,0;\alpha}_{a+;1,2;\beta}\phi \right)(x)=\left(\mathcal {H}^{w;2,1;\alpha}_{a+;2,3;\beta+\mu}\phi\right)(x)
  \end{equation}

  \end{corollary}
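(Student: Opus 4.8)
The plan is to obtain this corollary as a direct specialization of Corollary 2, which already establishes the composition $I^{\mu}_{a+}\mathcal{H}^{w;m,n;\alpha}_{a+;p,q;\beta}$ for arbitrary admissible indices $m,n,p,q$. First I would observe that, with the parameter choice $m=2$, $n=0$, $p=1$, $q=2$, the $H$-function $H^{m,n}_{p,q}=H^{2,0}_{1,2}$ occurring in the kernel of $\mathcal{H}^{w;2,0;\alpha}_{a+;1,2;\beta}$ (see (\ref{36})) is precisely the $H$-function representation of the generalized Macdonald function $\lambda^{(\eta)}_{\mu,\nu}$ recorded in (\ref{lambda1}). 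Thus the integral operator on the left-hand side of the assertion is simply the operator $\mathcal{H}$ taken with this Macdonald-type kernel, and the statement should be nothing but Corollary 2 read off at these specific values.

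Second, I would check that the parametric constraints required by Corollary 2 are met for this particular kernel. The lower parameters read off from (\ref{lambda1}) are $(b_1,\beta_1)=(0,1)$ and $(b_2,\beta_2)=(-\mu-\nu/\eta,\,1/\eta)$, so that the quantities $\Re(\alpha b_j/\beta_j)$ equal $0$ and $\Re(-\alpha\mu\eta-\alpha\nu)$, respectively. Since $\Re(\alpha b_1/\beta_1)=0$, the first term contributes only $\Re(\beta)>0$, and the binding part of the condition $\Re(\beta)+\min_{1\le j\le 2}\{\Re(\alpha b_j/\beta_j)\}>0$ comes from the second lower parameter; under the stated hypotheses $\eta>0$, $\Re(\mu)>1/\eta-1$, $\nu\in\mathbb{R}$, together with $\Re(\beta)>0$, one verifies directly that this convergence condition holds, so all hypotheses of Corollary 2 are in force.

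Finally, applying Corollary 2 and substituting $(m,\,n+1,\,p+1,\,q+1)=(2,1,2,3)$ into its right-hand side $\mathcal{H}^{w;m,n+1;\alpha}_{a+;p+1,q+1;\beta+\mu}$ yields exactly $\mathcal{H}^{w;2,1;\alpha}_{a+;2,3;\beta+\mu}$, which is the claimed formula. Since everything reduces to bookkeeping of the $H$-function indices, no genuine obstacle arises here: the only point deserving attention is the verification of the minimum-condition on the parameters carried out in the previous step, and that is a routine check rather than a new argument.
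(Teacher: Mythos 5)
Your argument is correct as a derivation, but it is not the route the paper takes. The paper disposes of this statement with the one-line remark that ``Corollaries 3 and 4 can be proved on similar lines of corollary 1,'' i.e.\ by repeating the direct computation: express $I^{\mu}_{a+}$ and $\mathcal{H}^{w;2,0;\alpha}_{a+;1,2;\beta}$ in integral form via (\ref{31}) and (\ref{36}), interchange the order of integration, replace $H^{2,0}_{1,2}$ by its Mellin--Barnes contour integral, evaluate the inner Beta-type $t$-integral, and reinterpret the result as an $H^{2,1}_{2,3}$ kernel. You instead read the assertion off as the literal specialization of Corollary 2 at $(m,n,p,q)=(2,0,1,2)$, so that $\mathcal{H}^{w;m,n+1;\alpha}_{a+;p+1,q+1;\beta+\mu}$ becomes $\mathcal{H}^{w;2,1;\alpha}_{a+;2,3;\beta+\mu}$, with the identification of $H^{2,0}_{1,2}$ as the Macdonald-type kernel (\ref{lambda1}) explaining where the hypotheses $\eta>0$, $\Re(\mu)>1/\eta-1$, $\nu\in\mathbb{R}$ come from. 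Your route is cleaner and avoids duplicating the contour-integral work; what the paper's sketched route buys is only independence from Corollary 2, which the paper itself merely asserts ``on similar lines'' of Corollary 1, so in substance both arguments rest on the same Mellin--Barnes calculation.

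One claim in your second step is overstated, however: the convergence condition $\Re(\beta)+\min_{1\leq j\leq 2}\{\Re(\alpha b_j/\beta_j)\}>0$ does \emph{not} follow from the listed hypotheses. From (\ref{lambda1}) one has $b_2/\beta_2=-\mu\eta-\nu$, and the hypothesis $\Re(\mu)>1/\eta-1$ gives a \emph{lower} bound on $\Re(\mu\eta)$, hence an \emph{upper} bound on $-\alpha\,\Re(\mu\eta+\nu)$ (for $\alpha>0$), which is the wrong direction. Concretely, take $\alpha=1$, $\eta=1$, $\nu=0$, $\mu=10$, $\beta=1$: every stated hypothesis holds, yet $\Re(\beta)+\min\{0,-10\}=-9<0$. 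So this condition is not a ``routine check''; it must simply be retained as a standing assumption, which is evidently what the paper intends when it says the corollaries hold ``under the various parametric constraints listed already with the definition (\ref{36}).'' Part of the trouble is a notational conflation (present in the paper and inherited in your write-up): the $\mu$ indexing the Macdonald function $\lambda^{(\eta)}_{\mu,\nu}$ in (\ref{lambda1}) and the order $\mu$ of the operator $I^{\mu}_{a+}$ are distinct parameters, and the inequality $\Re(\mu)>1/\eta-1$ constrains the former, not the latter. With the convergence condition assumed rather than ``verified,'' your specialization argument is complete and valid.
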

 \begin{corollary}
 Let $ \Re(\mu)> 0, \Re(\beta)>0;\; w \in  \mathbb{C}\setminus\{0\}; \left( \eta > 0;\quad \Re(\mu)>1/\eta-1; \quad \nu\in\Re; \right)$\\ Then there holds the following formula:
   \begin{equation}
   \left(\mathcal{H}^{w;2,0;\alpha}_{a+;1,2;\beta}I^{\mu}_{a+}\phi \right)(x)=\left(\mathcal {H}^{w;2,1;\alpha}_{a+;2,3;\beta+\mu}\phi\right)(x)
   \end{equation}
 \end{corollary}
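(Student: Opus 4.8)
The plan is to derive this corollary as the special case $m=2,\,n=0,\,p=1,\,q=2$ of the composition formula (\ref{6.23}) established in Corollary~1. First I would specialize the indices of the general kernel $H^{m,n}_{p,q}$ occurring in the operator $\mathcal{H}^{w;m,n;\alpha}_{a+;p,q;\beta}$ to these values; the kernel then becomes the Fox $H$-function $H^{2,0}_{1,2}$, which by the representation (\ref{lambda1}) is exactly the generalized Macdonald function $\lambda^{(\eta)}_{\mu,\nu}$ of Kilbas et al. Hence the left-hand side $\left(\mathcal{H}^{w;2,0;\alpha}_{a+;1,2;\beta}I^{\mu}_{a+}\phi\right)(x)$ of the asserted identity is precisely the left-hand side of (\ref{6.23}) under this specialization.

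Next I would simply read off the right-hand side from (\ref{6.23}). Corollary~1 asserts that the composition raises $n$ to $n+1$, raises both $p$ and $q$ by one, and shifts $\beta$ to $\beta+\mu$. Applying this to the parameter string $(m,n;p,q;\beta)=(2,0;1,2;\beta)$ sends it to $(2,1;2,3;\beta+\mu)$, which is exactly the operator $\mathcal{H}^{w;2,1;\alpha}_{a+;2,3;\beta+\mu}$ appearing on the right-hand side of the claim. No additional integral computation is required: the identity follows at once by substitution of these index values into (\ref{6.23}).

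The only point that genuinely needs verification is that the hypotheses of (\ref{6.23}) survive the specialization, and in particular that the convergence condition $\Re(\beta)+\min_{1\le j\le m}\Re(\alpha b_j/\beta_j)>0$ continues to hold with the explicit lower parameters $(b_j,\beta_j)$ read off from (\ref{lambda1}), namely $(0,1)$ and $(-\mu-\nu/\eta,\,1/\eta)$. I expect this compatibility check to be the main (though mild) obstacle: one must confirm that the parameter ranges stated for $\lambda^{(\eta)}_{\mu,\nu}$, that is $\eta>0$, $\Re(\mu)>1/\eta-1$ and $\nu\in\mathbb{R}$, together with $\Re(\beta)>0$, are consistent with that inequality. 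Once this is confirmed the corollary is an immediate consequence of Corollary~1, exactly as Corollary~3 follows from Corollary~2.
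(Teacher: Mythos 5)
Your proposal is correct, and it reaches the stated identity, but it is worth recording that it takes a shorter route than the one the paper indicates. The paper's own ``proof'' is the single remark that Corollaries 3 and 4 ``can be proved on similar lines of corollary 1,'' i.e., the reader is invited to re-run the entire machinery of Corollary 1's proof with the Macdonald-type kernel $\lambda^{(\eta)}_{\mu,\nu}$ of (\ref{lambda1}) in place of the general kernel: write both operators as integrals, interchange the $u$- and $t$-integrations, expand $H^{2,0}_{1,2}$ as a Mellin--Barnes contour integral, evaluate the inner beta-type integral, and reassemble the contour integral as an $H^{2,1}_{2,3}$-function. You instead observe that the statement is verbatim the case $(m,n,p,q)=(2,0,1,2)$ of the already-established general formula (\ref{6.23}), whose index constraints $1\leqq m\leqq q$ and $0\leqq n\leqq p$ are plainly satisfied, so no integral computation is needed at all. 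This specialization argument is more economical and equally rigorous; what the paper's longer route would buy is only an explicit re-verification, in the concrete case, of the steps (interchange of integrals, contour manipulation) that were carried out once and for all in Corollary 1. You are also right that the only genuine point of care is the survival of the hypotheses: the corollary as stated lists only $\Re(\mu)>0$, $\Re(\beta)>0$, $w\in\mathbb{C}\setminus\{0\}$ and the parameter ranges of $\lambda^{(\eta)}_{\mu,\nu}$, and these do not by themselves imply the convergence condition $\Re(\beta)+\min_{1\leqq j\leqq m}\Re(\alpha b_j/\beta_j)>0$ of Corollary 1 once the lower parameters $(0,1)$ and $(-\mu-\nu/\eta,\,1/\eta)$ are read off from (\ref{lambda1}); that inequality must simply be assumed in addition, as the paper implicitly does for all of its corollaries (note also that the $\mu,\nu$ appearing in $\lambda^{(\eta)}_{\mu,\nu}$ are parameters of the kernel and should not be conflated with the order $\mu$ of $I^{\mu}_{a+}$ or the type $\nu$ of the Hilfer operator). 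So the residual imprecision you flag lies in the paper's statement of the hypotheses, not in your argument.
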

  Corollaries 3 and 4 can be proved on similar lines of corollary 1.
  \begin{corollary}
  Let $ \Re(\mu)> 0, \Re(\beta)>0;\; w \in  \mathbb{C}\setminus\{0\}; \left( \eta > 0;\quad \Re(\mu)>1/\eta-1; \quad \nu\in\Re; \right)$\\ Then there holds the following formula:
  \begin{equation}
  \left(\mathcal{H}^{w;2,0;\alpha}_{a+;1,2;\beta}D^{\mu}_{a+}\phi\right)(x)=\left(\mathcal {H}^{w;2,2;\alpha}_{a+;3,4;\beta-\mu}\phi\right)(x)
  \end{equation}
  \end{corollary}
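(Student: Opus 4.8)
\section*{Proof proposal for Corollary 5}

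The plan is to obtain the assertion directly as the $m=2,\;n=0,\;p=1,\;q=2$ specialization of the composition formula for $\mathcal{H}^{w;m,n;\alpha}_{a+;p,q;\beta}\,D^{\mu}_{a+}$ stated in the Remark immediately after Theorem 3, that is,
\begin{equation*}
\left(\mathcal{H}^{w;m,n;\alpha}_{a+;p,q;\beta}D^{\mu}_{a+}\phi\right)(x)=\left(\mathcal{H}^{w;m,n+2;\alpha}_{a+;p+2,q+2;\beta-\mu}\phi\right)(x).
\end{equation*}
These four orders are precisely the ones for which, with the parameter array in \eqref{lambda1}, the kernel $H^{2,0}_{1,2}$ collapses to the generalized Macdonald function $\lambda^{(\eta)}_{\mu,\nu}$; this is what dictates the present choice and what ties the formula to that transcendental function. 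Inserting them gives $n+2=2$, $p+2=3$, $q+2=4$, so the right-hand side is $\mathcal{H}^{w;2,2;\alpha}_{a+;3,4;\beta-\mu}\phi$, which is the claimed identity. I would also confirm that the hypotheses ($\Re(\mu)>0$, $\Re(\beta)>0$, $w\neq0$, $\eta>0$, $\Re(\mu)>1/\eta-1$, $\nu\in\mathbb{R}$) are consistent with the admissibility constraints $1\le m\le q$ and $0\le n\le p$ both before the shift, where $(m,n,p,q)=(2,0,1,2)$, and after it, where $(m,n,p,q)=(2,2,3,4)$; both hold.

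If a self-contained derivation is wanted, I would mirror the proof of Theorem 3. First I would use the definition \eqref{32} to write $D^{\mu}_{a+}\phi=(d/dt)^{N}I^{N-\mu}_{a+}\phi$ with $N=[\Re(\mu)]+1$, substitute into the defining integral \eqref{36} of $\mathcal{H}^{w;2,0;\alpha}_{a+;1,2;\beta}$, and integrate by parts $N$ times so as to transfer the $t$-derivatives onto the kernel $(x-t)^{\beta-1}H^{2,0}_{1,2}[w(x-t)^{\alpha}]$; the boundary terms vanish under the stated conditions, since the factor $(x-t)^{\beta-1}$ and its $t$-derivatives kill the endpoint $t=x$ while $I^{N-\mu}_{a+}\phi$ and its derivatives vanish at $t=a$. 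Applying the $N$-fold differentiation rule for a power multiplied by an $H$-function, which advances the orders by $(0,1,1,1)$ and lowers the exponent by $N$, converts the integral into $\mathcal{H}^{w;2,1;\alpha}_{a+;2,3;\beta-N}$ acting on $I^{N-\mu}_{a+}\phi$. Finally I would absorb this Riemann--Liouville integral by Corollary 1, i.e.\ \eqref{6.23}, which advances the orders by a further $(0,1,1,1)$ and raises the power parameter from $\beta-N$ to $\beta-N+(N-\mu)=\beta-\mu$, producing $\mathcal{H}^{w;2,2;\alpha}_{a+;3,4;\beta-\mu}\phi$ as required.

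The hard part in the self-contained route is justifying the $N$-fold differentiation and integration by parts of the $H$-function kernel under the integral sign and checking that the cumulative effect on the parameter array is exactly $(m,n;p,q)\mapsto(m,n+2;p+2,q+2)$ with $\beta\mapsto\beta-\mu$; this relies on the Mellin--Barnes representation of $H^{m,n}_{p,q}$ and on the convergence condition $\Re(\beta)+\min_{1\le j\le m}\Re(\alpha b_{j}/\beta_{j})>0$, which is what guarantees the vanishing of the boundary contributions and legitimizes interchanging differentiation, integration, and the contour integral. By contrast, the specialization route needs nothing beyond what already underlies the Remark after Theorem 3, so the most economical proof is simply to cite that formula with the indicated orders.
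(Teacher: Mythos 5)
Your proposal is correct and follows the paper's own route: the paper disposes of this corollary by pointing to Theorem 3 and the remark following it (the general formula $\left(\mathcal{H}^{w;m,n;\alpha}_{a+;p,q;\beta}D^{\mu}_{a+}\phi\right)(x)=\left(\mathcal{H}^{w;m,n+2;\alpha}_{a+;p+2,q+2;\beta-\mu}\phi\right)(x)$), and your primary argument is exactly the specialization $(m,n,p,q)=(2,0,1,2)$ of that formula, with the parameter arithmetic and admissibility checks done correctly. Your backup self-contained derivation mirroring Theorem 3's proof is consistent with the paper's suggestion that the corollary "can be proved on similar lines of theorem 3," so no genuinely different route is involved.
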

  \begin{corollary}
  Let $ \Re(\mu)> 0, \Re(\beta)>0;\; w \in  \mathbb{C}\setminus\{0\}; \left( \eta > 0;\quad \Re(\mu)>1/\eta-1; \quad \nu\in\Re; \right)$\\ Then there holds the following formula:
  \begin{equation}
    \left(D^{\mu}_{a+}\mathcal{H}^{w;2,0;\alpha}_{a+;1,2;\beta}\phi\right)(x)=\left(\mathcal {H}^{w;2,2;\alpha}_{a+;3,4;\beta-\mu}\phi\right)(x)
    \end{equation}
  \end{corollary}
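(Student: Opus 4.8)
The plan is to deduce the asserted identity directly from Theorem 3, of which it is merely the specialization corresponding to $m=2$, $n=0$, $p=1$, $q=2$. First I would note that, by the representation \eqref{lambda1}, the kernel $H^{2,0}_{1,2}[\,\cdot\,]$ of the operator $\mathcal{H}^{w;2,0;\alpha}_{a+;1,2;\beta}$ is precisely the generalized Macdonald function $\lambda^{(\eta)}_{\mu,\nu}$ of Kilbas et al., obtained on choosing the numerator pair $(a_1,\alpha_1)=\big(1-(\nu+1)/\eta,\,1/\eta\big)$ and the denominator pairs $(b_1,\beta_1)=(0,1)$, $(b_2,\beta_2)=\big(-\mu-\nu/\eta,\,1/\eta\big)$. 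Hence the hypotheses $\eta>0$, $\Re(\mu)>1/\eta-1$, $\nu\in\Re$ imposed in the corollary are exactly those guaranteeing that this kernel is well defined, while $\Re(\mu)>0$ and $\Re(\beta)>0$ are the hypotheses needed for Theorem 3 to apply to this member of the family.

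Next I would invoke formula \eqref{6.24} with the above values of $m,n,p,q$. According to the parameter increments recorded there, the passage $\mathcal{H}^{w;m,n;\alpha}_{a+;p,q;\beta}\mapsto\mathcal{H}^{w;m,n+2;\alpha}_{a+;p+2,q+2;\beta-\mu}$ sends $(m,n,p,q;\beta)=(2,0,1,2;\beta)$ to $(2,2,3,4;\beta-\mu)$. The right-hand side of \eqref{6.24} therefore becomes $\mathcal{H}^{w;2,2;\alpha}_{a+;3,4;\beta-\mu}$, which is exactly the operator on the right of the claimed formula, and the identity follows with no further computation.

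There is no genuine analytic obstacle here: the entire content is supplied by Theorem 3, and the work reduces to bookkeeping of indices. The only point meriting a line of verification is that, after the increments $(m,n,p,q)=(2,0,1,2)\mapsto(2,2,3,4)$, the admissibility constraints $1\leqq m\leqq q$, $0\leqq n\leqq p$ and $\Re(\beta-\mu)+\min_{1\leqq j\leqq m}\Re(\alpha b_j/\beta_j)>0$ attached to \eqref{36} persist, so that the output operator $\mathcal{H}^{w;2,2;\alpha}_{a+;3,4;\beta-\mu}$ is itself well defined; this is ensured by the listed constraints together with $\Re(\beta)>0$ and $\Re(\mu)>0$. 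Corollary 5 is proved in the identical manner, starting instead from the companion formula displayed in the Remark following Theorem 3.
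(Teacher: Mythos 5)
Your proof is correct and takes essentially the same route as the paper, which disposes of this corollary with the single remark that Corollaries 5 and 6 ``can be proved on similar lines of Theorem 3'': your direct specialization of \eqref{6.24} with $(m,n,p,q)=(2,0,1,2)$, giving the increments $(2,2,3,4;\beta-\mu)$, together with the identification of the kernel $H^{2,0}_{1,2}$ with the Kilbas--Saigo--Glaeske function via \eqref{lambda1}, is precisely that argument. The only caveat is your closing claim that the well-definedness condition for the output operator (with $\beta$ replaced by $\beta-\mu$) is ``ensured'' by $\Re(\beta)>0$ and $\Re(\mu)>0$; subtracting $\mu$ makes that condition strictly harder to satisfy, so it does not follow automatically --- but this gap is inherited from Theorem 3 itself and is glossed over by the paper in exactly the same way.
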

   Corollaries 5 and 6 can be proved on similar lines of theorem 3.


\begin{thebibliography}{99}
   \bibitem{1}
      Baleanu, D. and Agarwal, P.(2014) A composition formula of the Pathway
integral transform operator, {\it Note Mat.}
     {\bf 34}(2)  145--155.
     \bibitem{BK}
      Bonilla, B., Kilbas, A.A., Rivero, M.,Rodriguez, L. and Trujillo J.J.(2000) Modified Bessel-type function and solution of differential and integral equations, {\it Indian. J. Pure Appl. Math.} {\bf 31} 93-109.
      \bibitem{HJ}
       Glaeske, H.J., Kilbas A.A. and Saigo. M (2000)  A modified Bessel-type intehral transform and its compositions with fractional calculus operators on spaces $F_{p,\mu}$ and $ F'_{p,\mu} $, {\it J. of comp. and App. Math}. {\bf 118} 151-168.


  \bibitem{34}
   Gorenflo, R., Mainardi, F. and Srivastava, H.M. (1997) Special
   functions in fractional relaxation-oscillation and
   fractional diffusion-wave phenomena,
   \textit{Proceedings of the Eighth International Colloquium on
   Differential Equations} (Plovdiv, Bulgaria)
   (D. Bainov, Editor), VSP Publishers, Utrecht and Tokyo, 1998, 195--202.

  \bibitem{grad}
   Gradshteyn, I.S. and Ryzhik, I.M. (1965) {\it Tables of Integrals, Series and Products}, Academic Press inc.
   \bibitem{KS}
    Kilbas, A.A., Saigo, M. and Glaeske, H.J. (1999) A modified transform of Bessel type and its compositions with operators of fractional integration and differentiation, {\it Dokl. Nats. Akad. Nauk Belarusi} {\bf 43} 26-30.

   \bibitem{38}
    Kilbas, A.A., Srivastava, H.M. and Trujillo, J.J. (2006)
    \textit{Theory and Applications of Fractional Differential Equations},
    North-Holland Mathematical Studies
    Vol. \textbf{204} Elsevier (North-Holland) Science
    Publishers, Amsterdam, London and New York.
    \bibitem{39}
     Miller, K.S. and Ross, B. (1993) {\it An Introduction to the
     Fractional Calculus and Fractional Differential Equations}, A
     Wiley-Interscience Publication, John Wiley and Sons, New York, Chichester,
     Brisbane, Toronto and Singapore.
     \bibitem{311}
     Samko, S.G., Kilbas, A.A. and Marichev, O.I. (1993) {\it Fractional Integrals and Derivatives:Theory and Applications},Gordon and Breach Science Publishers, Reading, Tokyo, Paris, Berlin and
     Langhorne (Pennsylvania).
     \bibitem{HM3}
     Srivastava, H.M. Gupta, K.C. and Goyal, S.P. (1982)
     {\it The H-functions of One and Two Variables with Applications}, South Asian Publishers, New Delhi.

     \bibitem{ZT}
     Srivastava, H.M. and Tomovski, \v{Z}. (2009) Fractional calculus with an integral operator containing a generalized Mittag-Leffler function in the Kernel, {\it Appl. Math. Comput.} {\bf 211} 198-210.
     \bibitem{321}
      Srivastava, H.M. and Saxena, R.K. (2001) Operators of fractional
     integration and their applications, {\it Appl. Math. Comput.}
     {\bf 118}  1-52.
     \bibitem{324}
      Tomovski, \v{Z}., Hilfer, R. and Srivastava, H.M. (2010) Fractional and operational calculus with generalized fractional derivative
     operators and Mittag-Leffler type functions,
     {\it Integral Transforms Spec. Funct.} {\bf 21}  797--814.
  \end{thebibliography}
  \end{document}